\numberwithin{equation}{section}
\theoremstyle{plain}
\newtheorem{thm}{Theorem}[section]
\newtheorem{lemma}{Lemma}[section]
\theoremstyle{definition}
\theoremstyle{remark}
\newtheorem{remark}{Remark}[section]
\def\citeapos#1{\citeauthor{#1}'s (\citeyear{#1})}
\def\T{{ \mathrm{\scriptscriptstyle T} }}
\begin{document}

\begin{frontmatter}
 \title{A Gaussian sequence approach for proving minimaxity: A Review}
\runtitle{Minimaxity}

\begin{aug}
\author{\fnms{Yuzo} \snm{Maruyama}\thanksref{t1,m1}
\ead[label=e1]
{maruyama@mi.u-tokyo.ac.jp}}
 \and
\author{\fnms{William, E.} \snm{Strawderman}
\thanksref{t2,m2}
\ead[label=e2]{straw@stat.rutgers.edu}}

\thankstext{t1}{This work was partially supported by KAKENHI \#25330035, \#16K00040.}
\thankstext{t2}{This work was partially supported by grant from the Simons Foundation (\#418098 to William Strawderman).}
\address{University of Tokyo\thanksmark{m1} and Rutgers University\thanksmark{m2} \\
\printead{e1,e2}}
\runauthor{Y. Maruyama and W. Strawderman}

\end{aug}

\begin{abstract}
 This paper reviews minimax best equivariant estimation in these invariant estimation
 problems: a location parameter, a scale parameter and a (Wishart) covariance matrix.
 We briefly review development of the best equivariant estimator as a generalized Bayes
 estimator relative to right invariant Haar measure in each case.
 Then we prove minimaxity of the best equivariant procedure by giving a least favorable
 prior sequence based on non-truncated Gaussian distributions. The results in this paper
 are all known, but we bring a fresh and somewhat unified approach by using,
 in contrast to most proofs in the literature, a smooth sequence of non truncated priors.
 This approach leads to some simplifications in the minimaxity proofs.
\end{abstract}

\begin{keyword}[class=AMS]
\kwd[Primary ]{62C20}  
\end{keyword}

\begin{keyword}
\kwd{minimaxity}
\kwd{least favorable prior}
 \kwd{invariance}
 \kwd{equivariance}
\end{keyword}
\end{frontmatter}
\section{Introduction}
\label{sec:intro}
We review some results on minimaxity of best equivariant estimators from
what we hope is a fresh and somewhat unified perspective.
Our basic approach is to start with a general equivariant estimator, and
demonstrate that the best equivariant estimator is a generalized Bayes estimator, $\delta_0$,
with respect to an invariant prior.
We then choose an appropriate sequence of Gaussian priors whose support is the entirety of the
parameter space and show that the Bayes risks converge to the constant risk of
$\delta_0$.
This implies that $\delta_0$ is minimax.
All results on best equivariance and minimaxity, which we consider in this paper,
are known in the literature.
But, using a sequence of Gaussian priors as a least favorable sequence,
simplifies the proofs and gives fresh and unified perspective.

In this paper, we consider the following three estimation problems.
\begin{description}
 \item[Estimation of a location parameter:] 
	    Let the density function of $\bmX$ be given by
\begin{align}\label{location.density}
 f(\bmx-\mu)=f(x_1-\mu,\dots,x_n-\mu).
\end{align}
Consider estimation of the location parameter $\mu$ under location invariant loss
\begin{align}\label{general.loss.1}
L(\delta-\mu).
\end{align}
	    We study equivariant estimators under the location group,  given by
\begin{align}\label{location.equiv.estimator}
 \delta(\bmx-\mu)=\delta(\bmx)-\mu.
\end{align}
 \item[Estimation of a scale parameter:] 
	    Let the density function of $\bmX$ be given by
\begin{align}\label{scale.density}
 \sigma^{-n}f(\bmx/\sigma),
\end{align}
with scale parameter $\sigma$, where $\bmx/\sigma=(x_1/\sigma,\dots,x_n/\sigma)$.
Consider estimation of the scale $\sigma$ under scale invariant loss
\begin{align}\label{general.loss.2}
 L(\delta/\sigma).
\end{align}
We study equivariant estimators under scale group, given by
\begin{align}
 \delta(\bmx/\sigma)=\delta(\bmx)/\sigma.
\end{align}
\item[Estimation of covariance matrix:]	    
We study estimation of $\bmSi$ based on a $p\times p$ random matrix $\bmV$
	   having a Wishart distribution $\mathcal{W}_p(n,\bmSi)$,
	   where the density is given in \eqref{density.W} below.
An estimator $\bmde$ is evaluated by the invariant loss 
\begin{align}
 L(\bmSi^{-1}\bmde).
\end{align}
	   We consider equivariant estimators under the lower triangular group, given by
\begin{align}
 \bmde(\bmA\bmV\bmA^\T)=\bmA\bmde(\bmV)\bmA^\T,
\end{align}
where $ \mathcal{T}^+$, the set of $p\times p$ lower triangular matrices with
positive diagonal entries.
\end{description}
For the first two cases
with the squared error loss $(\delta-\mu)^2$ and the entropy loss
$\delta/\sigma-\log(\delta/\sigma)-1$, respectively,
the so called \cite{Pitman-1939} estimators
\begin{align}
  \hat{\mu}_0(\bmx)&=\frac{\int_{-\infty}^\infty \mu f(\bmx -\mu)\rd \mu}
 {\int_{-\infty}^\infty  f(\bmx -\mu)\rd \mu},\\
 \hat{\sigma}_0(\bmx)&=\frac{\int_0^\infty \sigma^{-n-1} f(\bmx/\sigma)\rd \sigma}
 {\int_0^\infty \sigma^{-n-2} f(\bmx/\sigma)\rd \sigma}
\end{align}
are well-known to be best equivariant and minimax.
Clearly, they are generalized Bayes with respect to $\pi(\mu)=1$ and $\pi(\sigma)=1/\sigma$,
respectively.
\cite{Girshick-Savage-1951} gave the original proof of minimaxity.
\cite{Kubokawa-2004} also gives a proof and further developments in the restricted parameter setting.
Both use a sequence of uniform distribution on expanding interval as least favorable priors.

For the last case, \cite{James-Stein-1961} show that the best equivariant estimator is given by
\begin{align}
 \hat{\bmSi}_0=\bmT\mathrm{diag}(d_1,\dots,d_p)\bmT^\T
\end{align}
where $\bmT\in\mathcal{T}^+$ is from the Cholesky decomposition of $\bmV=\bmT\bmT^\T$ and
$d_i=1/(n+p-2i+1)$ for $i=1,\dots,p$.
Note that the group of $p\times p$ lower triangular matrices with
positive diagonal entries is solvable, and the result of \cite{Kiefer-1957}
implies the minimaxity of $ \hat{\bmSi}_0$.
\cite{Tsukuma-Kubokawa-2015} gives as a sequence of least favorable priors,
the invariant prior truncated on a sequence of expanding sets.

In each case, the sequence of priors we employ is based on a Gaussian sequence
of possibly transformed parameters.
This is in contrast to most proofs in the literature which use truncated versions
of the invariant prior.
As a consequence, the resulting proofs are less complicated.

Section \ref{sec:GA} is devoted to developing the best equivariant estimator
as a generalized Bayes estimator with respect to a right invariant (Haar measure)
prior in each case. The general approach is basically that of \cite{Hora-Buehler-1966}.
Section \ref{sec:minimaxity} provides 
minimaxity proofs of the best equivariant procedure by giving a least favorable
prior sequence based on (possibly transformed) Gaussian priors in each cases.
We give some concluding remarks in Section \ref{sec:CR}.

\section{Establishing best equivariant procedures}
\label{sec:GA}
All results in this section are well-known.
Our proof of best equivariance for $\hat{\mu}_0$, $\hat{\sigma}_0$ and $\hat{\bmSi}_0$
follow from \cite{Hora-Buehler-1966}.
The reader is referred to 
\citeapos{Hora-Buehler-1966} for further details on their general development
of a best equivariant estimator as the generalized Bayes estimator relative to
right invariant Haar measure.
\subsection{Estimation of location parameter}
\label{sec:be_l}
Consider an equivariant estimator which satisfies $\delta(\bmx-\mu)=\delta(\bmx)-\mu$.
Then we have a following result.
\begin{thm}\label{thm:BE_location}
Let $\bmX$ have distribution \eqref{location.density} and let the loss be given by \eqref{general.loss.1}.
The generalized Bayes estimator with respect to the invariant prior $\pi(\mu)=1$, $\hat{\mu}_0(\bmx)$,
 is best equivariant under the location group, that is,
 \begin{align*}
 \hat{\mu}_0(\bmx)=\argmin_{\delta}
 \int_{-\infty}^\infty L(\delta(\bmx)-\mu)f(\bmx-\mu)\rd\mu.
\end{align*}
\end{thm}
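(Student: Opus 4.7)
The plan is the classical cross-section argument: first show that every equivariant $\delta$ has constant risk, then parametrize equivariant estimators by a free function of the maximal invariant, and finally verify that the resulting pointwise minimization coincides with the generalized Bayes criterion under $\pi(\mu)=1$. The main point requiring care will be the interchange of minimization and integration, which should be routine under the standing integrability hypotheses that guarantee a finite best-equivariant risk.

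For the first step, I would substitute $\bm{y} = \bmx - \mu\bm{1}$ and use the equivariance identity $\delta(\bm{y}+\mu\bm{1}) = \delta(\bm{y}) + \mu$ to obtain
\[
R(\mu,\delta) = \int L(\delta(\bmx) - \mu)\, f(\bmx - \mu)\rd\bmx = \int L(\delta(\bm{y}))\, f(\bm{y})\rd\bm{y} = R(0,\delta),
\]
so the risk is free of $\mu$. With the cross-section $\psi(\bmx) = x_1$ and maximal invariant $\bm{z}(\bmx) = (x_2 - x_1, \ldots, x_n - x_1)$, equivariance is equivalent to writing $\delta(\bmx) = x_1 + h(\bm{z}(\bmx))$ for an arbitrary function $h:\mathbb{R}^{n-1}\to\mathbb{R}$, and conversely every such expression is equivariant.

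Next, the change of variables $\bm{y} \leftrightarrow (y_1, \bm{z})$ (Jacobian $1$) combined with Fubini gives
\[
R(0,\delta) = \int_{\mathbb{R}^{n-1}} \Biggl[\int_{\mathbb{R}} L(y_1 + h(\bm{z}))\, f(y_1, y_1 + z_2, \ldots, y_1 + z_n)\rd y_1\Biggr]\rd\bm{z},
\]
so that minimizing $R(0,\delta)$ over equivariant $\delta$ reduces to minimizing the inner bracket pointwise in $\bm{z}$ over the scalar $c = h(\bm{z})$.

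Finally, the generalized Bayes criterion defining $\hat{\mu}_0(\bmx)$ asks, for each $\bmx = (x_1, x_1 + z_2, \ldots, x_1 + z_n)$, to minimize $c \mapsto \int L(c - \mu)\, f(\bmx - \mu)\rd\mu$ over $c \in \mathbb{R}$. Substituting $\mu = x_1 - y_1$ turns this into $\int L((c - x_1) + y_1)\, f(y_1, y_1 + z_2, \ldots, y_1 + z_n)\rd y_1$, which is exactly the inner bracket above with $h(\bm{z}) = c - x_1$. Consequently, the pointwise minimizer $h^{\star}(\bm{z})$ satisfies $h^{\star}(\bm{z}(\bmx)) = \hat{\mu}_0(\bmx) - x_1$, so that $\hat{\mu}_0$ is the best equivariant estimator, as claimed.
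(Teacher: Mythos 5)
Your proof is correct and follows essentially the same route as the paper: both reduce the constant risk of an equivariant estimator, via a unit-Jacobian change of variables, to an outer integral over a maximal invariant of the inner quantity $\int L(\delta(\bmu)-\theta)f(\bmu-\theta)\,\rd\theta$, whose pointwise minimizer is the flat-prior generalized Bayes estimator. The only cosmetic differences are that you use the first coordinate as the cross-section and make the parametrization $\delta(\bmx)=x_1+h(\bm{z})$ explicit, whereas the paper fixes the last coordinate $u_n$ and keeps $\delta$ implicit throughout.
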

\begin{proof}
The risk of the equivariant estimator \eqref{location.equiv.estimator}
is written as
\begin{align}
 &R(\delta(\bmx),\mu) \notag \\
 &=\int_{\mathbb{R}^n} L(\delta(\bmx)-\mu)f(\bmx-\mu)\rd\bmx \notag\\
 &=\int_{\mathbb{R}^n} L(\delta(\bmx-\mu))f(\bmx-\mu)\rd\bmx \notag\\
 &=\int_{\mathbb{R}^n} L(\delta(\bmz)) f(\bmz)\rd\bmz \label{risk.inv.location}\\
 &=\int_{\mathbb{R}^{n-1}}\int_{-\infty}^\infty
 L(\delta(\bmz_{n-1},z_n)) f(\bmz_{n-1},z_n)\rd z_n\rd\bmz_{n-1} \notag\\
 &=\int_{\mathbb{R}^{n-1}}\int_{-\infty}^\infty
 L(\delta(\bmz_{n-1},u_n-\theta)) f(\bmz_{n-1},u_n-\theta)\rd \theta\rd\bmz_{n-1} \notag\\
 &\qquad z_n=u_n-\theta \ (u_n\text{ is a constant and }\theta\text{ is variable}) \notag\\
 &=\int_{\mathbb{R}^{n-1}}\int_{-\infty}^\infty
 L(\delta(\bmu_{n-1}-\theta,u_n-\theta)) f(\bmu_{n-1}-\theta,u_n-\theta)\rd \theta\rd\bmu_{n-1} \notag\\
 &=\int_{\mathbb{R}^{n-1}}\left(
\int_{-\infty}^\infty
 L(\delta(\bmu)-\theta) f(\bmu-\theta)\rd \theta
 \right)\rd\bmu_{n-1}. \notag
\end{align}
Then the best equivariant estimator is 
\begin{align*}
 \hat{\mu}_0(\bmx)=\argmin_{\delta}
 \int_{-\infty}^\infty L(\delta(\bmx)-\mu)f(\bmx-\mu)\rd\mu.
\end{align*}
\end{proof}
\subsection{Estimation of scale}
\label{sec:be_s}
Consider an equivariant estimator which satisfies $\delta(\bmx/\sigma)=\delta(\bmx)/\sigma$.
Then we have a following result.
 \begin{thm}\label{thm:be_s}
Let $\bmX$ have distribution \eqref{scale.density} and let the loss be given by \eqref{general.loss.2}.
Then the generalized Bayes estimator, with respect to the prior $\pi(\sigma)=1/\sigma$, $\hat{\sigma}_0(\bmx)$, 
  is best equivariant under the scale group, that is,
\begin{align*}
 \hat{\sigma}_0(\bmx)=\argmin_{\delta}
 \int_0^\infty L(\delta/\sigma)\frac{f(\bmx/\sigma)}{\sigma^{n}}\frac{\rd\sigma }{\sigma}.
\end{align*}
 \end{thm}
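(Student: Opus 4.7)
The plan is to mirror the structure of the location-parameter proof in Theorem~\ref{thm:BE_location}, replacing additive reparameterizations by multiplicative ones, and replacing Lebesgue measure on the orbit parameter by the right Haar measure $\rd \sigma/\sigma$ of the scale group. The starting observation is that any scale-equivariant $\delta$ has constant risk: substituting $\bmz=\bmx/\sigma$ in
\begin{align*}
R(\delta,\sigma)=\int L(\delta(\bmx)/\sigma)\,\sigma^{-n}f(\bmx/\sigma)\,\rd\bmx,
\end{align*}
and using $\delta(\bmx)/\sigma=\delta(\bmx/\sigma)$, collapses the risk to the scale-free integral $\int L(\delta(\bmz))f(\bmz)\,\rd\bmz$, in direct analogy with display~\eqref{risk.inv.location}.

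From that constant-risk expression, I would split $\bmz=(\bmz_{n-1},z_n)$ and perform a single change of variables adapted to the scale group, $z_n=u_n/\tau$ and $\bmz_{n-1}=\bmu_{n-1}/\tau$, with $u_n>0$ a fixed constant and $(\tau,\bmu_{n-1})\in(0,\infty)\times\mathbb{R}^{n-1}$ the new coordinates. The Jacobian decouples cleanly as $u_n\tau^{-(n+1)}$, and after invoking scale equivariance $\delta(\bmu/\tau)=\delta(\bmu)/\tau$ the transformed risk takes the form
\begin{align*}
R(\delta,\sigma)=u_n\int_{\mathbb{R}^{n-1}}\left\{\int_0^\infty L\!\bigl(\delta(\bmu)/\tau\bigr)\,\frac{f(\bmu/\tau)}{\tau^{n}}\,\frac{\rd\tau}{\tau}\right\}\rd\bmu_{n-1},
\end{align*}
where $\bmu=(\bmu_{n-1},u_n)$. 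The quantity in braces depends on $\delta$ only through the scalar $\delta(\bmu)$, so one may minimize it pointwise in $\bmu$; the outer integration in $\bmu_{n-1}$ then delivers a global minimizer. The pointwise minimizer is precisely $\hat\sigma_0(\bmu)$ defined via the generalized Bayes rule with respect to $\pi(\sigma)=1/\sigma$.

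The main technical issue I expect to encounter is bookkeeping around the sign of $z_n$: the substitution $z_n=u_n/\tau$ with $\tau>0$ only covers $z_n$ of the same sign as $u_n$. If the support of $f$ meets both half-lines in the last coordinate, I would split the $z_n$-integral into $\{z_n>0\}$ and $\{z_n<0\}$, use $u_n>0$ on the first piece and $u_n<0$ on the second, and note that the inner Bayes integral is the same function of $\bmu$ in either case, so pointwise minimization still identifies $\hat\sigma_0$. A similar care is needed so the arbitrary constant $u_n$ does not appear in the end result, which it does not, since the outer measure $u_n\,\rd\bmu_{n-1}$ only rescales the integrated risk by a constant irrelevant to the $\argmin$. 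Once these support and constant issues are dispatched, the argument is otherwise a direct transcription of the location-case calculation, with the additive shift $\theta$ replaced by the multiplicative shift $\tau$ and the Haar measure $\rd\theta$ replaced by $\rd\tau/\tau$.
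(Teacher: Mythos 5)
Your proposal is correct and follows essentially the same route as the paper: after reducing to the constant risk $\int L(\delta(\bmz))f(\bmz)\,\rd\bmz$, the paper makes the same substitution $z_n=u_n/w$, $\bmz_{n-1}=\bmu_{n-1}/w$ (done in two steps, with the same Jacobian $u_n w^{-(n+1)}$) and handles the sign of $z_n$ by writing the last integral as a sum over $j\in\{-1,1\}$, which is exactly the half-line split you describe. The pointwise minimization of the inner integral against $\sigma^{-n}f(\bmx/\sigma)\,\rd\sigma/\sigma$ then identifies $\hat\sigma_0$ just as you say.
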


\begin{proof}
The risk of the equivariant estimator is written as
\begin{align}
 &R(\delta(\bmx),\sigma) \notag \\
 &=\int_{\mathbb{R}^n} L(\delta(\bmx)/\sigma)\sigma^{-n}f(\bmx/\sigma)\rd\bmx \notag\\
 &=\int_{\mathbb{R}^n} L(\delta(\bmx/\sigma))\sigma^{-n}f(\bmx/\sigma)\rd\bmx \notag\\
 &=\int_{\mathbb{R}^n} L(\delta(\bmz)) f(\bmz)\rd\bmz \label{risk.inv}\\
 &=\int_{\mathbb{R}^{n-1}}\left(\int_{-\infty}^0+ \int_0^{\infty}\right)
 L(\delta(\bmz_{n-1},z_n)) f(\bmz_{n-1},z_n)\rd z_n\rd\bmz_{n-1} \notag\\
 &=\int_{\mathbb{R}^{n-1}}\sum_{j=\{-1,1\}} \int_0^{\infty}
 L(\delta(\bmz_{n-1},j z_n)) f(\bmz_{n-1},j z_n)\rd z_n\rd\bmz_{n-1} \notag\\
 &=\int_{\mathbb{R}^{n-1}}\sum_{j=\{-1,1\}} \int_0^{\infty}
 L(\delta(\bmz_{n-1},j u_n/w)) f(\bmz_{n-1},j u_n/w)\frac{u_n}{w^2}\rd w\rd\bmz_{n-1} \notag\\
 &\qquad z_n=u_n/w \ (\text{where }u_n\text{ is positive constant and }w\text{ is variable}) \notag\\
 &=\int_{\mathbb{R}^{n-1}}\sum_{j=\{-1,1\}} \int_0^\infty L(\delta(\bmu_{n-1}/w,ju_n/w)) \frac{1}{w^{n-1}}\frac{u_n}{w^2}
 f(\bmu_{n-1}/w,ju_n/w)\rd\bmu_{n-1}\rd w \notag\\   
 &\qquad z_i=u_i/w \ (i=1,\dots,n-1)\quad (\text{where }u_i\text{ is variable and }w\text{ is constant}) \notag\\
 &=\int_{\mathbb{R}^{n-1}}u_n\sum_{j=\{-1,1\}} \left\{\int_0^\infty L(\delta(\bmu_{n-1},ju_n)/w)
 \frac{f(\bmu_{n-1}/w,ju_n/w)}{w^{n+1}}\rd w\right\}
 \rd\bmu_{n-1}. \notag
\end{align}
Then the best equivariant estimator 
is 
\begin{align*}
 \hat{\sigma}_0(\bmx)=\argmin_{\delta}
 \int_0^\infty L(\delta/\sigma)\sigma^{-n-1}f(\bmx/\sigma)\rd\sigma .
\end{align*}
\end{proof}

\subsection{Estimation of covariance matrix}
\label{sec:be_l}
Let $\bmV$ have a Wishart distribution $\mathcal{W}_p(n,\bmSi)$. 
Let $\mathcal{T}^+$ be the set of $p\times p$ lower triangular matrices with
positive diagonal entries.
By the Cholesky decomposition, $\bmSi^{-1}$ and $\bmV$ can be written
as
\begin{align*}
 \bmSi^{-1}=\bmTh^\T \bmTh \text{ and }\bmV=\bmT\bmT^\T
\end{align*}
for $\bmTh=(\theta_{ij})\in\mathcal{T}^+$ and $\bmT=(t_{ij})\in\mathcal{T}^+$.
As in Theorem 7.2.1 of \cite{Anderson-2003},
the probability density function of $\bmT$ is
\begin{equation}\label{density.W}
 f_W(\bmT|\bmTh)\gamma(\rd \bmT)
=\frac{1}{C(p,n)}\left|\bmTh\bmT\right|^n\exp\left[-\frac{1}{2}\mathrm{tr}\left\{(\bmTh\bmT)(\bmTh\bmT)^\T\right\}\right]\gamma(\rd\bmT)
\end{equation}
where $C(p,n)$ is a normalizing constant given by
\begin{align}
 C(p,n)=2^{p(n-2)/2}\pi^{p(p-1)/4}\prod_{i=1}^p\Gamma(\{n+1-i\}/2)
\end{align}
and $\gamma(\rd\bmT)$ is the left-invariant Haar measure on $\mathcal{T}^+$ given by
\begin{align}\label{gammagamma}
 \gamma(\rd\bmT)=\prod_{i=1}^p t_{ii}^{-i}\rd \bmT.
\end{align}
An estimator $\bmde$ is evaluated by the invariant 
loss function given by
\begin{align}\label{general.loss.3}
L( \bmTh\bmde\bmTh^\T).
\end{align}
Denote the risk function by
\begin{align*}
 R(\bmde,\bmSi)=\int_{\mathcal{T}^+}L(\bmTh\bmde\bmTh^\T)f_W(\bmT|\bmTh)\gamma(\rd\bmT).
\end{align*}
For all $\bmA\in\mathcal{T}^+$,
the group transformation with respect to $\mathcal{T}^+$ on a random matrix
$\bmT$ and a parameter matrix $\bmTh$ is defined by $(\bmT,\bmTh)\to (\bmA\bmT,\bmTh\bmA^{-1})$.
The group $\bar{G}$ operating on $\bmTh$ is transitive.
Any equivariant estimator of
\begin{align*}
 \bmSi=(\bmTh^\T \bmTh)^{-1}=\bmTh^{-1}(\bmTh^{-1})^\T
\end{align*}
under the lower triangular group is of form given by
\begin{equation}\label{equiv}
 \bmde(\bmA\bmT)=\bmA\bmde(\bmT)\bmA^\T.
\end{equation}

\begin{thm}\label{thm.BE.matrix}
 Let $\bmV=\bmT\bmT^\T\sim \mathcal{W}_p(n,\bmSi)$ and let the loss be $L(\bmTh\bmde\bmTh^\T)$ as in
 \eqref{general.loss.3}.
Then the generalized Bayes estimator with respect to the prior 
\begin{align}
\pi(\bmTh)=\gamma(\rd\bmTh),
\end{align}
$\bmde_0$, is best equivariant under lower triangular group, that is,
\begin{align}
 \bmde_0(\bmT)=\argmin_{\bmde}\int_{\mathcal{T}^+}L(\bmTh\bmde\bmTh^\T)f_W(\bmT|\bmTh)
 \gamma(\rd\bmTh).
\end{align}
\end{thm}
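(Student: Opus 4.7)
The plan is to emulate the arguments of Theorems \ref{thm:BE_location} and \ref{thm:be_s}: the risk of an equivariant $\bmde$ is first reduced, via a change of variable that absorbs $\bmTh$, to a single integral that is free of $\bmSi$, and a second substitution, isolating any given value of the observation $\bmT$, rewrites that integral so that the dependence on $\bmde(\bmT)$ is precisely the posterior expected loss under $\pi(\bmTh)=\gamma(\rd\bmTh)$, up to a factor free of $\bmde$. Pointwise minimization then identifies $\bmde_0$ as both the best equivariant estimator and the generalized Bayes estimator.

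Concretely, I start from
\begin{equation*}
R(\bmde,\bmSi)=\int_{\mathcal{T}^+}L(\bmTh\bmde(\bmT)\bmTh^\T)f_W(\bmT|\bmTh)\gamma(\rd\bmT),
\end{equation*}
insert the equivariance identity $\bmTh\bmde(\bmT)\bmTh^\T=\bmde(\bmTh\bmT)$ from \eqref{equiv}, and substitute $\bmZ=\bmTh\bmT$. The density \eqref{density.W} depends on $\bmTh,\bmT$ only through the product $\bmTh\bmT$, so $f_W(\bmT|\bmTh)=f_W(\bmZ|I_p)$, and by the left-invariance of $\gamma$ one has $\gamma(\rd\bmT)=\gamma(\rd\bmZ)$. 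This yields
\begin{equation*}
R(\bmde,\bmSi)=\int_{\mathcal{T}^+}L(\bmde(\bmZ))f_W(\bmZ|I_p)\gamma(\rd\bmZ),
\end{equation*}
confirming that the risk is constant in $\bmSi$. Next, for any fixed $\bmT\in\mathcal{T}^+$ I substitute $\bmZ=\bmS\bmT$. Applying equivariance again gives $\bmde(\bmS\bmT)=\bmS\bmde(\bmT)\bmS^\T$, while $f_W(\bmS\bmT|I_p)=f_W(\bmT|\bmS)$ puts the Wishart density back into posterior form. Combined with the Jacobian discussed below, this produces
\begin{equation*}
R(\bmde,\bmSi)=\Bigl(\prod_{i=1}^p t_{ii}^{p+1-2i}\Bigr)\int_{\mathcal{T}^+}L(\bmS\bmde(\bmT)\bmS^\T)f_W(\bmT|\bmS)\gamma(\rd\bmS)
\end{equation*}
for every $\bmT\in\mathcal{T}^+$. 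Since the prefactor is free of $\bmde$, minimization of $R(\bmde,\bmSi)$ over equivariant estimators reduces pointwise in $\bmT$ to minimization of the inner integral, which is precisely the posterior expected loss under $\pi(\bmTh)=\gamma(\rd\bmTh)$; the minimizer is $\bmde_0(\bmT)$, as claimed.

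The main technical obstacle is the Jacobian for the second substitution. In the location and scale proofs the acting group is abelian and unimodular, so the corresponding changes of variable are essentially free. Here $\mathcal{T}^+$ is non-unimodular, and one must compute how $\gamma$ transforms under right multiplication by $\bmT$: a row-wise determinant calculation shows that $\bmS\mapsto\bmS\bmT$ has Jacobian $\prod_{j=1}^p t_{jj}^{p-j+1}$, which, combined with the $\prod_i z_{ii}^{-i}=\prod_i (s_{ii}t_{ii})^{-i}$ factor coming from \eqref{gammagamma}, delivers the announced multiplier $\prod_{i=1}^p t_{ii}^{p+1-2i}$. Once this bookkeeping is settled, the rest of the argument is a routine transcription of the earlier proofs.
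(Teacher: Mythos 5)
Your proposal is correct and follows essentially the same route as the paper: reduce the risk to an integral over $\bmZ=\bmTh\bmT$ using equivariance, the form of $f_W$, and left invariance of $\gamma$, then re-substitute $\bmZ=\bmS\bmT$ for fixed $\bmT$ to expose the posterior expected loss under $\gamma(\rd\bmTh)$ times the data-dependent prefactor $\prod_i t_{ii}^{p-2i+1}$, which matches the paper's final display. The only cosmetic difference is that you compute the Jacobian of right multiplication directly, whereas the paper packages the same computation as a conversion from $\gamma$ to the right-invariant measure $\nu$, an application of right invariance, and a conversion back; the two bookkeeping devices are equivalent and your factor $\prod_j t_{jj}^{p-j+1}\cdot\prod_i(s_{ii}t_{ii})^{-i}$ agrees with the paper's $\prod_i s_{ii}^{-i}\prod_i t_{ii}^{p-2i+1}$.
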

Note that $\gamma(\rd\bmTh)$ is the ``left'' invariant measure, which seems to contradict
the general theory by \cite{Hora-Buehler-1966}.
However this seeming anomaly is due to our parameterization
$\bmV=\bmT\bmT^\T$, $\bmSi^{-1}=\bmTh^\T \bmTh$ and
\begin{align}
 \bmSi=\bmTh^{-1}(\bmTh^{-1})^\T.
\end{align}
The general theory implies that
\begin{align}
 \nu(\rd\bmTh^{-1})= \gamma(\rd\bmTh)
\end{align}
where $\nu$ is right invariant Haar measure on $\mathcal{T}^+$ given by
\begin{align}
 \nu(\rd\bmZ)=\prod_{i=1}^p z_{ii}^{-(p-i+1)}\rd\bmZ.
\end{align}
In the proof below, in addition to the left invariance of $\gamma$, and the right invariance
of $\nu$, we use the fact that
\begin{equation}\label{formform}
 f_W(\bmT|\bmTh)=f_W(\bmTh \bmT|\bmI)=f_W(\bmI|\bmTh \bmT).
\end{equation}
\begin{proof}[Proof of Theorem \ref{thm.BE.matrix}]
By \eqref{density.W} and \eqref{general.loss.3}, the risk of an equivariant estimator
can be expressed as
\begin{align*}
 & R(\bmde,\bmSi) \\
 &=\int_{\mathcal{T}^+}L(\bmTh\bmde(\bmT)\bmTh^\T)f_W(\bmT|\bmTh)\gamma(\rd\bmT) \\
 &=\int_{\mathcal{T}^+}L(\bmde(\bmTh\bmT))f_W(\bmT|\bmTh)\gamma(\rd\bmT) \\
 &=\int_{\mathcal{T}^+}L(\bmde(\bmZ))f_W(\bmZ|\bmI)\gamma(\rd\bmZ) \quad (\bmZ=\bmTh\bmT, \text{ and left invariance of }\gamma) \\
 &=\int_{\mathcal{T}^+}L(\bmde(\bmZ))f_W(\bmI|\bmZ)\prod_{i=1}^p z_{ii}^{-i}\rd\bmZ \ (\text{by the form of }f_W)\\
 &=\int_{\mathcal{T}^+}L(\bmde(\bmZ))f_W(\bmI|\bmZ)\prod_{i=1}^p z_{ii}^{p-2i+1}
 \nu(\rd\bmZ) \\
 &=\int_{\mathcal{T}^+}L(\bmde(\bmW\bmS))f_W(\bmS|\bmW)
 \prod_{i=1}^p (w_{ii}s_{ii})^{p-2i+1}
\nu(\rd\bmW) \\ &\qquad (\bmZ= \bmW\bmS, \text{ and right invariance of }\nu)\\
 &=\prod_{i=1}^p s_{ii}^{p-2i+1}
 \int_{\mathcal{T}^+}L(\bmW\bmde(\bmS)\bmW^\T)f_W(\bmS|\bmW) \gamma(\rd\bmW), 
 (\text{by \eqref{equiv} and the form of }\gamma(\rd w)) 
\end{align*}
Then the best equivariant estimator with respect to the group $\mathcal{T}^+$
 can be written by 
\begin{align*}
 \bmde_0(\bmT)=\argmin_{\bmde}\int_{\mathcal{T}^+}L(\bmTh\bmde\bmTh^\T)f_W(\bmT|\bmTh)
 \gamma(\rd\bmTh).
\end{align*}
\end{proof}
 \section{Minimaxity}
\label{sec:minimaxity}
In this section, we choose an appropriate sequence of priors whose support is the entirety of the
parameter space and show that the Bayes risks converge to the constant risk of
the best equivariant estimator $\delta_0$.
By a well-known standard result (see e.g.~\cite{Lehmann-Casella-1998}),
this implies minimaxity of $\delta_0$.
In order to deal with explicit expressions for minimax estimators
as well as for somewhat technical reasons,
in this section, we specify the loss functions to be standard choices in the literature.
For the location and scale problem, the squared error loss and the entropy loss
\begin{align*}
 L(\delta-\mu)=(\delta-\mu)^2, \quad L(\delta/\sigma)=\delta/\sigma-\log(\delta/\sigma)-1
\end{align*}
are used respectively. For estimation of covariance matrix,
the so called \citeapos{Stein-1956-tr} loss function given by
	   \begin{equation}\label{Stein-loss.0}
\begin{split}
    L(\bmTh\bmde\bmTh^\T) 
 &=\mathrm{tr}\bmSi^{-1}\bmde-\log|\bmSi^{-1}\bmde|-p\\
 &=\mathrm{tr}(\bmTh\bmde\bmTh^\T) -\log|\bmTh\bmde\bmTh^\T|-p  
\end{split}	
	   \end{equation}
is used.	   

\subsection{Estimation of location}
\label{sec:minimaxity_location}
In this section, we show the minimaxity of $\hat{\mu}^0$, 
the best location equivariant estimator under squared error loss.
A point of departure from most proofs in the literature is that
a smooth sequence of Gaussian densities simplifies the proof.
It is also easily applied in the multivariate location family (See Remark \ref{rem:multi}).

Recall that the Bayes estimator corresponding to a (generalized) prior $\pi(\mu)$,
under squared error loss, is given by
\begin{align}
 \delta_\pi(\bmx)
 &=\argmin_{\delta}\int_{-\infty}^\infty L(\delta-\mu)f(\bmx-\mu)
 \pi(\mu)\rd \mu \label{sq.1}\\
 & =\frac{\int \mu f(\bmx-\mu)\pi(\mu)\rd\mu}{\int f(\bmx-\mu)\pi(\mu)\rd\mu}
\quad\text{under }L(t)=t^2.
 \label{sq.2}
\end{align} 
Hence, by Theorem \ref{thm:BE_location}, the best equivariant estimator is given by
\begin{align}\label{location.best.0}
 \hat{\mu}_0(\bmx)
 =\frac{\int \mu f(\bmx-\mu)\rd\mu}{\int f(\bmx-\mu)\rd\mu}.
\end{align} 
\begin{thm}\label{thm:location.simple}
 Let $\bmX$ have distribution \eqref{location.density} and let the loss
 be given by $L(\delta-\mu)=(\delta-\mu)^2$.
 Then the best equivariant estimator, $\hat{\mu}_0(\bmx)$,   given by \eqref{location.best.0},
 is minimax, and the minimax constant risk is given by
\begin{align*}
R_0 =\int L(\hat{\mu}_0(\bmx))f(\bmx)\rd\bmx
 =\int \left\{\hat{\mu}_0(\bmx)\right\}^2f(\bmx)\rd\bmx.
\end{align*}
\end{thm}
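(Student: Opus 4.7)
The plan is to invoke the standard least favorable prior sequence criterion (Lehmann--Casella): since the risk $R(\hat{\mu}_0,\mu)$ is constant in $\mu$ by the invariance computation leading to \eqref{risk.inv.location} (its value being $R_0$, obtained by setting $\mu=0$), it suffices to exhibit proper priors $\pi_k$ with Bayes risks $r(\pi_k)\to R_0$. I take $\pi_k(\mu)=(2\pi k)^{-1/2}e^{-\mu^2/(2k)}$. The upper bound $r(\pi_k)\leq R_0$ is then immediate: letting $\delta_k$ denote the Bayes estimator, which is the posterior mean under $\pi_k$ by \eqref{sq.2},
\begin{equation*}
r(\pi_k)=\int R(\delta_k,\mu)\pi_k(\mu)\rd\mu\leq\int R(\hat{\mu}_0,\mu)\pi_k(\mu)\rd\mu=R_0.
\end{equation*}
The real work is therefore in the matching lower bound.

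For the lower bound I expand $r(\pi_k)$ as a double integral in $(\bmx,\mu)$, substitute $\bmx=\bmz+\mu$ to factor out the $\mu$-free density $f(\bmz)$, and replace the dummy variable $\mu'$ in the definition of $\delta_k$ by $\mu'=\mu+\eta$. After cancelling the $e^{-\mu^2/(2k)}$ factor common to the numerator and denominator of $\delta_k$, this yields
\begin{equation*}
r(\pi_k)=\int\!\!\int h_k(\bmz,\mu)^2 f(\bmz)\pi_k(\mu)\rd\mu\,\rd\bmz,\quad h_k(\bmz,\mu):=\frac{\int \eta f(\bmz-\eta)e^{-\mu\eta/k-\eta^2/(2k)}\rd\eta}{\int f(\bmz-\eta)e^{-\mu\eta/k-\eta^2/(2k)}\rd\eta}.
\end{equation*}
Now I rescale $\mu=\sqrt{k}\,\xi$, so that $\pi_k(\mu)\rd\mu$ becomes the standard Gaussian weight $\phi(\xi)\rd\xi$, independent of $k$. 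For fixed $\bmz,\xi$ the exponent $-\xi\eta/\sqrt{k}-\eta^2/(2k)$ tends to zero pointwise in $\eta$, and completing the square gives the $k$-free bound $\exp(-\xi\eta/\sqrt{k}-\eta^2/(2k))\leq e^{\xi^2/2}$; dominated convergence applied separately to numerator and denominator yields $h_k(\bmz,\sqrt{k}\,\xi)\to\hat{\mu}_0(\bmz)$. Fatou's lemma applied to the nonnegative integrand then gives
\begin{equation*}
\liminf_k r(\pi_k)\geq\int\!\!\int\hat{\mu}_0(\bmz)^2 f(\bmz)\phi(\xi)\rd\xi\,\rd\bmz=R_0,
\end{equation*}
and combined with the upper bound this forces $r(\pi_k)\to R_0$, proving minimaxity.

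The principal technical obstacle is the pointwise convergence $h_k(\bmz,\sqrt{k}\,\xi)\to\hat{\mu}_0(\bmz)$, and it is exactly here that the smooth Gaussian prior pays dividends. The rescaling $\mu=\sqrt{k}\,\xi$ converts the moving prior $\pi_k$ into a fixed Gaussian weight in $\xi$, leaving a clean inner $\eta$-integral with $k$-free dominator $e^{\xi^2/2}|\eta|f(\bmz-\eta)$; the latter is integrable under the same mild moment condition that is already needed for $\hat{\mu}_0$ to be well defined. A truncated uniform sequence would not admit such a clean rescaling, which is why the classical proofs are more delicate; everything else here, namely Fatou, the trivial upper bound, and the identification of the limit with $R_0$, is then routine.
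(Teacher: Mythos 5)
Your proof is correct and follows essentially the same route as the paper's: a centered Gaussian prior sequence with growing variance, the change of variables $\bmx=\bmz+\mu$ and a shift in the posterior integral to isolate the invariant part of the Bayes estimator, a rescaling of $\mu$ that freezes the prior weight at a standard Gaussian, dominated convergence to identify the pointwise limit as $\hat{\mu}_0(\bmz)$, and Fatou for the lower bound. The only differences are cosmetic (variance $k$ versus $k^2$, and your explicit cancellation of the $e^{-\mu^2/(2k)}$ factor yielding the concrete dominating function $e^{\xi^2/2}|\eta|f(\bmz-\eta)$, which the paper leaves implicit).
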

Under the squared error loss, the Bayes estimator is explicitly written as \eqref{sq.2}, 
However, in the following proof, the implicit expression \eqref{sq.1} is mainly used
to indicate possible extension for more general loss functions.
For the same reason, $L(\delta(\bmx)-\mu)$ instead of $(\delta(\bmx)-\mu)^2$ is used.
\begin{proof}[Proof of Theorem \ref{thm:location.simple}]
 Let
 \begin{align*}
  \phi(\mu)=\frac{1}{\sqrt{2\pi}}\exp(-\mu^2/2) 
  \text{ and }
  \phi_k(\mu)=\frac{1}{k}\phi(\mu/k).
 \end{align*}
 The Bayes risk of $\delta(\bmx)$ under the prior $\phi_k(\mu)$ is given by
\begin{align*}
 r_k(\phi_k,\delta(\bmx))=
\iint L(\delta(\bmx)-\mu)f(\bmx-\mu)\phi_k(\mu)\rd\mu\rd\bmx.
\end{align*}
Also the corresponding Bayes estimator is given by
\begin{align*}
 \delta^\phi_k(\bmx)
=\argmin_{\delta}\int_{-\infty}^\infty L(\delta-\mu)f(\bmx-\mu)
 \phi_k(\mu)\rd \mu.
\end{align*} 
 Clearly 
\begin{align*}
 r_k(\phi_k,\delta^\phi_k)\leq r_k(\phi_k,\hat{\mu}_0)=R_0,
\end{align*}
 and therefore, to show $  \lim_{k\to\infty} r_k(\phi_k,\delta^\phi_k)= R_0$,
it suffices  to prove
\begin{align*}
 \liminf_{k\to\infty} r_k(\phi_k,\delta^\phi_k)\geq R_0.
\end{align*}
Making the transformation $\bmz=\bmx-\mu$ yields
\begin{align*}
 r_k(\phi_k,\delta^\phi_k)=
\iint
 L(\delta^\phi_k(\bmz+\mu)-\mu)f(\bmz)\phi_k(\mu)\rd\mu\rd\bmz
\end{align*}
 where
\begin{align*}
 \delta^\phi_k(\bmz+\mu)
 = \argmin_{\delta}\int_{-\infty}^\infty L(\delta-\theta)f(\bmz+\mu-\theta)
 \phi_k(\theta)\rd \theta.
\end{align*}
 Now, make the transformation $t=\theta-\mu$.
We then have
\begin{align*}
 \delta^\phi_k(\bmz+\mu)
 = \argmin_{\delta}\int_{-\infty}^\infty L(\delta-\mu-t)f(\bmz-t)
 \phi_k(t+\mu)\rd t
\end{align*}
or equivalently
\begin{align*}
\delta^*_k(\bmz,\mu):= \delta^\phi_k(\bmz+\mu)-\mu=
 \argmin_{\delta}\int_{-\infty}^\infty L(\delta-t)f(\bmz-t)
 \phi_k(\mu+t)\rd t .
\end{align*}
 Hence, by change of variables, we have
\begin{align*}
 r_k(\phi_k,\delta^\phi_k)&=
\iint
 L(\delta^*_k(\bmz,\mu))f(\bmz)\phi_k(\mu)\rd\mu\rd\bmz \\
 &=
\iint
 L(\delta^*_k(\bmz,k\mu))f(\bmz)\phi(\mu)\rd\mu\rd\bmz. 
\end{align*}
 Note also $k\phi_k(t+k\mu)=\phi(t/k+\mu)$ and
\begin{align*}
\delta^*_k(\bmz,k\mu)& =\argmin_{\delta}\int_{-\infty}^\infty L(\delta-t)f(\bmz-t)
 k\phi_k(k\mu+t)\rd t \\ &=
\frac{\int_{-\infty}^\infty t f(\bmz-t)
 \phi_k(t/k+\mu)\rd t}{\int_{-\infty}^\infty f(\bmz-t)
 \phi_k(t/k+\mu)\rd t} \ (\text{for squared error loss }L(t)=t^2).
\end{align*}
Since $\lim_{k\to\infty}\phi(t/k+\mu)=\phi(\mu)$ 
for any $\mu$, the dominated convergence theorem implies
 \begin{equation}\label{important.1}
  \lim_{k\to\infty}  \delta^*_k(\bmz,k\mu)=\hat{\mu}_0(\bmz)
 \end{equation}
 and hence
 \begin{align}\label{important.2}
  \lim_{k\to\infty} L(\delta^*_k(\bmz,k\mu))=\lim_{k\to\infty}
  \{\delta^*_k(\bmz,k\mu)\}^2=\{\hat{\mu}_0(\bmz)\}^2=L(\hat{\mu}_0(\bmz)).
 \end{align}
 Hence by 
 Fatou's lemma, we obtain that
\begin{equation}\label{fatou.2.location}
 \begin{split}
\liminf_{k\to\infty}   r_k(\phi_k,\delta^\phi_k)
&= \liminf_{k\to\infty}\iint
 L(\delta^*_k(\bmz,k\mu))f(\bmz)\phi(\mu)\rd\mu\rd \bmz \\
 &\geq
 \iint
 \liminf_{k\to\infty}L(\delta^*_k(\bmz,k\mu))f(\bmz)\phi(\mu)\rd\mu\rd\bmz \\
 &=
 \iint
 L(\hat{\mu}_0(\bmz))f(\bmz)\phi(\mu)\rd\mu\rd\bmz \\
  &= R_0.
\end{split}
\end{equation}
\end{proof}

\begin{remark}\label{rem:multi}
In the multivariate case, suppose $ \bmx_1,\dots,\bmx_p\in\mathbb{R}^n$ and
\begin{align*}
 \{\bmx_1,\dots,\bmx_p\}\sim f(\bmx_1-\mu_1,\dots,\bmx_p-\mu_p).
\end{align*}
 Let $\bmmu=(\mu_1,\dots,\mu_p)^\T$. Then the Pitman estimator of $\bmmu$,
 the generalized Bayes estimator with respect to $\pi(\bmmu)=1$, is
\begin{equation}\label{multivariate-location}
 \hat{\bmmu}(\bmx_1,\dots,\bmx_p)
 =\frac{\int_{\mathbb{R}^p}\bmmu f(\bmx_1-\mu_1,\dots,\bmx_p-\mu_p)\rd\bmmu}
 {\int_{\mathbb{R}^p} f(\bmx_1-\mu_1,\dots,\bmx_p-\mu_p)\rd\bmmu}.
\end{equation}
Using
\begin{align*}
 \pi_k(\bmmu)=\prod_{i=1}^p\phi_k(\mu_i)=\frac{1}{(2\pi k^2)^{p/2}}
 \exp\left(-\frac{\|\bmmu\|^2}{2k^2}\right)
\end{align*}
as the least favorable sequence of priors gives minimaxity
under the quadratic loss $\|\bmde-\bmmu\|^2$ of \eqref{multivariate-location}.
\end{remark}

 \subsection{Estimation of scale}
\label{sec:minimaxity_scale}
In this section, we show the minimaxity of the scale Pitman estimator
under entropy loss given by
\begin{equation}\label{entropy.entropy}
 L(\delta/\sigma)=\delta/\sigma-\log(\delta/\sigma)-1.
\end{equation}
Recall that the Bayes estimator corresponding to a (generalized) prior $\pi(\sigma)$,
under entropy loss \eqref{entropy.entropy}, is given by
\begin{align}
 \delta_\pi(\bmx)
 &=\argmin_{\delta}\int_{-\infty}^\infty L(\delta/\sigma)\sigma^{-n}f(\bmx/\sigma)
 \pi(\sigma)\rd \sigma \label{ent.1}\\
 & =\frac{\int  \sigma^{-n}f(\bmx/\sigma)\pi(\sigma)\rd\sigma}
{\int  \sigma^{-n-1}f(\bmx/\sigma)\pi(\sigma)\rd\sigma}.\label{ent.2}
\end{align} 
Hence the generalized Bayes estimator under $\pi(\sigma)=1/\sigma$,
which is best equivariant as shown in Theorem \ref{thm:be_s}, is given by
\begin{align}\label{scale.best.0}
 \hat{\sigma}_0(\bmx)
=\frac{\int  \sigma^{-n-1}f(\bmx/\sigma)\rd\sigma}
{\int  \sigma^{-n-2}f(\bmx/\sigma)\rd\sigma}.
\end{align} 
We have a following minimaxity result.
\begin{thm}\label{thm:scale}
 Let $\bmX$ have distribution \eqref{scale.density} and let the loss
 be given by $L(\delta/\sigma)=\delta/\sigma-\log(\delta/\sigma)-1$.
 Then the best equivariant estimator, $\hat{\sigma}_0(\bmx)$,   given by \eqref{scale.best.0},
 is minimax, and the minimax constant risk is given by
\begin{align*}
R_0 =\int L(\hat{\sigma}_0(\bmx))f(\bmx)\rd\bmx
 =\int \left\{\hat{\sigma}_0(\bmx)-\log\hat{\sigma}_0(\bmx)-1\right\}f(\bmx)\rd\bmx.
\end{align*}
\end{thm}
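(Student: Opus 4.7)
The plan is to adapt the argument of Theorem \ref{thm:location.simple} to the multiplicative structure of the scale problem by reparameterizing via $\eta = \log\sigma$, under which the invariant prior $\pi(\sigma) = 1/\sigma$ becomes Lebesgue measure on $\mathbb{R}$. Accordingly, I would take as the least favorable sequence the Gaussian prior $\phi_k(\eta) = k^{-1}\phi(\eta/k)$ in $\eta$, equivalently the log-normal density $\phi_k(\log\sigma)/\sigma$ in $\sigma$. Since the best equivariant estimator $\hat{\sigma}_0$ has constant risk $R_0$, the Bayes risk immediately satisfies $r_k(\phi_k,\delta_k^\phi) \leq R_0$, and minimaxity will follow once I establish $\liminf_{k\to\infty} r_k(\phi_k,\delta_k^\phi) \geq R_0$.

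For the liminf bound, I would perform two successive changes of variable in the iterated integral defining the Bayes risk: first $\bmz = \bmx/\sigma$, which absorbs the factor $\sigma^{-n}$ against the Jacobian $\sigma^n$ and leaves $f(\bmz)\rd\bmz$; then $\sigma = e^\eta$, which sends $\rd\sigma/\sigma$ to $\rd\eta$. A subsequent rescaling $\eta \mapsto k\eta$ brings the risk into the form
\begin{align*}
 r_k(\phi_k,\delta_k^\phi) = \iint L\bigl(\delta_k^*(\bmz,k\eta)\bigr)\, f(\bmz)\phi(\eta)\,\rd\eta\,\rd\bmz,
\end{align*}
where $\delta_k^*(\bmz,\eta) := \delta_k^\phi(e^\eta\bmz)/e^\eta$. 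This mirrors the display \eqref{fatou.2.location} in the location case and sets up Fatou's lemma, whose use is legitimate since the entropy loss is nonnegative.

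The heart of the argument is the pointwise convergence $\delta_k^*(\bmz,k\eta) \to \hat{\sigma}_0(\bmz)$ for each fixed $(\bmz,\eta)$. Inserting the closed form \eqref{ent.2} of the Bayes estimator, substituting $\sigma = e^\eta\tau$ in both numerator and denominator, and using the identity $k\phi_k(k\eta + \log\tau) = \phi(\eta + (\log\tau)/k)$ yield
\begin{align*}
\delta_k^*(\bmz, k\eta) = \frac{\int_0^\infty \tau^{-n-1} f(\bmz/\tau)\, \phi\bigl(\eta + (\log\tau)/k\bigr)\,\rd\tau}{\int_0^\infty \tau^{-n-2} f(\bmz/\tau)\, \phi\bigl(\eta + (\log\tau)/k\bigr)\,\rd\tau}.
\end{align*}
Since $\phi(\eta + (\log\tau)/k) \to \phi(\eta)$ for each fixed $\tau$, dominated convergence (using the uniform bound $\phi \leq 1/\sqrt{2\pi}$) produces the limits in both integrals, and the ratio tends to $\hat{\sigma}_0(\bmz)$ as given by \eqref{scale.best.0}. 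Continuity of $L$ on $(0,\infty)$ then yields $L(\delta_k^*(\bmz,k\eta)) \to L(\hat{\sigma}_0(\bmz))$, and Fatou's lemma closes the argument by producing $\liminf_k r_k(\phi_k,\delta_k^\phi) \geq R_0$.

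The main obstacle I anticipate is supplying the integrable dominants required at the dominated convergence step for the ratio. The bound $\phi \leq 1/\sqrt{2\pi}$ reduces the task to integrability of $\tau^{-n-1} f(\bmz/\tau)$ and $\tau^{-n-2} f(\bmz/\tau)$ on $(0,\infty)$, which is automatic in any setting where $\hat{\sigma}_0(\bmz)$ is finite for almost every $\bmz$ (otherwise the theorem's conclusion is vacuous). A secondary concern is that the limiting denominator must be strictly positive in order to commute the limit with the singularity of $L(t) = t - \log t - 1$ at $t = 0$, but this too follows from the hypothesized finiteness of $\hat{\sigma}_0$.
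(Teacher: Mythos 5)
Your proposal is correct and follows essentially the same route as the paper's proof: the paper also uses the log-normal prior $\pi_k(\sigma)=k^{-1}\phi(\log\sigma/k)\sigma^{-1}$, performs the changes of variable $\bml=\bmx/\sigma$ and $y=z/\sigma$, rescales via $\sigma=\eta^k$ (which is your $\sigma=e^{k\eta}$ in multiplicative notation), and concludes with the same dominated convergence and Fatou steps. Your added remarks on the integrable dominants and the positivity of the limiting denominator only make explicit what the paper leaves implicit.
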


\begin{proof}
Assume $\log\sigma \sim N(0,k^2)$
or equivalently
\begin{align*}
\pi_k(\sigma)=\frac{1}{k}\phi(\log \sigma/k)\frac{1}{\sigma},
\end{align*}
 where $\phi(\cdot)$ is the pdf of $N(0,1)$.
Then the Bayes estimator satisfies 
\begin{align*}
 \delta^\pi_k=\delta^\pi_k(\bmx)=
 \argmin_{\delta}
\int_0^\infty L(\delta/\sigma)\sigma^{-n}f(\bmx/\sigma)\phi_k(\sigma)\rd\sigma
\end{align*}
and the Bayes risk is given by
\begin{align*}
 r_k(\pi_k,\delta^\pi_k)=\iint
 L(\delta/\sigma)\sigma^{-n}f(\bmx/\sigma)\pi_k(\sigma)\rd\sigma\rd\bmx.
\end{align*}

 Clearly 
\begin{align*}
 r_k(\pi_k,\delta^\pi_k)\leq r_k(\pi_k,\hat{\sigma}_0(\bmx))=R_0,
\end{align*}
 and therefore, to show $  \lim_{k\to\infty} r_k(\phi_k,\delta^\phi_k)= R_0$,
it suffices  to prove
\begin{align*}
 \liminf_{k\to\infty} r_k(\pi_k,\delta^\pi_k)\geq R_0.
\end{align*}
Making the transformation $\bml=\bmx/\sigma$ yields
\begin{align*}
 r_k(\pi_k,\delta^\pi_k)=
\iint
 L(\delta^\pi_k(\sigma\bml)/\sigma)f(\bml)\pi_k(\sigma)\rd\sigma\rd\bml
\end{align*}
 where
\begin{align*}
 \delta^\pi_k(\sigma\bml)
 = \argmin_{\delta}\int_0^\infty L(\delta/z)z^{-n}f(\sigma\bml/z)
 \pi_k(z)\rd z.
\end{align*}
 Now, make the transformation $y=z/\sigma$.
We then have
\begin{align*}
 \delta^\pi_k(\sigma\bml)
 = \argmin_{\delta}\int_0^\infty L(\delta/(y\sigma))y^{-n}f(\bml/y)
 \pi_k(\sigma y)\rd y 
\end{align*}
or equivalently
\begin{align*}
\delta^*_k(\bml,\sigma):= \frac{\delta^\pi_k(\sigma\bml)}{\sigma}=
 \argmin_{\delta}\int_0^\infty L(\delta/y)y^{-n}f(\bml/y)
 \pi_k(\sigma y)\rd y .
\end{align*}
 Hence
\begin{align*}
 r_k(\pi_k,\delta^\pi_k)&=\iint L(\delta^*_k(\bml,\sigma))f(\bml)\pi_k(\sigma)\rd\sigma\rd\bml \\
 &=\iint L(\delta^*_k(\bml,\eta^k))f(\bml)\pi_1(\eta)\rd\eta\rd\bml 
\end{align*}
 where $\sigma=\eta^k$ and $\delta^*_k(\bml,\eta^k)$ is explicitly given as
 (when the loss is \eqref{entropy.entropy})
 \begin{equation}
  \delta^*_k(\bml,\eta^k) =\frac{\int y^{-n}f(\bml/y) \pi_k(\eta^k y)\rd y }
   {\int y^{-n-1}f(\bml/y) \pi_k(\eta^k y)\rd y} . 
 \end{equation}
Note
\begin{align*}
 k \pi_k(\eta^k y)=\frac{1}{\eta^k y}\frac{1}{\sqrt{2\pi}}\exp\left(-\frac{(\log \eta^k +\log y)^2}{2k^2}\right).
\end{align*}
Since 
 \begin{align*}
\lim_{k\to\infty} k \eta^k \pi_k(\eta^k y)=\frac{1}{y}\phi(\log \eta)
 \end{align*}
for any $\eta$,
the dominated convergence theorem implies
\begin{equation}
\lim_{k\to\infty} \delta^*_k(\bml,\eta^k)=\hat{\sigma}_0(\bml).
\end{equation}
Also the continuity of $L(\cdot)$ implies
 \begin{equation}
  \lim_{k\to\infty}L(\delta^*_k(\bml,\eta^k))=L(\hat{\sigma}_0(\bml)).
 \end{equation}
 Hence by   Fatou's lemma, we obtain that
\begin{equation}\label{fatou.2}
 \begin{split}
\liminf_{k\to\infty}   r_k(\pi_k,\delta^\pi_k)
&= \liminf_{k\to\infty}\iint
 L(\delta^*_k(\bml,\eta^k))f(\bml)\pi_1(\eta)\rd\eta\rd\bml \\
 &\geq
 \iint
 \liminf_{k\to\infty}L(\delta^*_k(\bml,\eta^k))f(\bml)\pi_1(\eta)\rd\eta\rd\bml \\
 &\geq
 \iint
 L(\hat{\sigma}_0(\bml))f(\bml)\pi_1(\eta)\rd\eta\rd\bml \\
  &=  R_0.
\end{split}
\end{equation}
\end{proof}

\begin{remark}
 In the same way, we can consider the estimation of $\sigma^c$ with $c\in\mathbb{R}$
 and propose the corresponding result,
\begin{align*}
 \hat{\sigma}_{0c}(\bmx)
=\frac{\int  \sigma^{-n-1+c}f(\bmx/\sigma)\rd\sigma}
{\int  \sigma^{-n-2+c}f(\bmx/\sigma)\rd\sigma}
\end{align*} 
 is minimax and best equivariant for estimating $\sigma^c$ under entropy loss
 \begin{align*}
 L(\delta/\sigma^c)=\delta/\sigma^c-\log(\delta/\sigma^c)-1.  
 \end{align*}
\end{remark}

\subsection{Estimation of covariance matrix}
\label{sec:minimaxity_matrix}
As we mentioned in the beginning of this section, we use 
the so called \citeapos{Stein-1956-tr} loss function given by
	   \begin{equation}\label{Stein-loss.00}
\begin{split}
    L(\bmTh\bmde\bmTh^\T) 
 &=\mathrm{tr}\bmSi^{-1}\bmde-\log|\bmSi^{-1}\bmde|-p\\
 &=\mathrm{tr}(\bmTh\bmde\bmTh^\T) -\log|\bmTh\bmde\bmTh^\T|-p  .
\end{split}	
	   \end{equation}
\cite{James-Stein-1961}, in their Section 5,
show that the best equivariant estimator is given by
\begin{align}\label{JSJS}
 \hat{\bmSi}_0=\bmT\mathrm{diag}(d_1,\dots,d_p)\bmT^\T
\end{align}
where $\bmT\in\mathcal{T}^+$ is from the Cholesky decomposition of $\bmV=\bmT\bmT^\T$ and
$d_i=1/(n+p-2i+1)$ for $i=1,\dots,p$.
As demonstrated in the literature, by e.g.~\cite{Tsukuma-Kubokawa-2015},
the best equivariant estimator under the loss \eqref{Stein-loss.00}
may also be shown to be $\hat{\bmSi}_0$ by using the generalized Bayes
representation given in Theorem \ref{thm.BE.matrix} since
\begin{align*}
& \argmin_{\bmde}\int_{\mathcal{T}^+}L(\bmTh\bmde\bmTh^\T)f_W(\bmT|\bmTh)
 \gamma(\rd\bmTh) \\
& \argmin_{\bmde}\int_{\mathcal{T}^+}\left\{\mathrm{tr}\left(\bmTh^\T\bmTh\bmde\right)-\log|\bmde|\right\}f_W(\bmT|\bmTh)
 \gamma(\rd\bmTh) \\
 &=
\left(\int_{\mathcal{T}^+}\bmTh^\T\bmTh f_W(\bmT|\bmTh) \gamma(\rd\bmTh)\right)^{-1}
 \int_{\mathcal{T}^+} f_W(\bmT|\bmTh) \gamma(\rd\bmTh) \\
 &=
\left(\int_{\mathcal{T}^+}\bmTh^\T\bmTh f_W(\bmT|\bmTh) \prod \theta_{ii}^{p-2i+1}\nu(\rd\bmTh)\right)^{-1}\\
&\qquad\times \int_{\mathcal{T}^+} f_W(\bmT|\bmTh) \prod \theta_{ii}^{p-2i+1}\nu(\rd\bmTh) \\
 &=
\left(\int_{\mathcal{T}^+}(\bmZ\bmT^{-1})^\T\bmZ\bmT^{-1} f_W(\bmZ|\bmI) \prod (t^{-1}_{ii}z_{ii}^{p-2i+1})\nu(\rd\bmZ)\right)^{-1}\\
&\qquad\times \int_{\mathcal{T}^+} f_W(\bmZ|\bmI) \prod (t^{-1}_{ii}z_{ii}^{p-2i+1})\nu(\rd\bmZ) \\
 &=
\bmT\left(\int_{\mathcal{T}^+}\bmZ^\T\bmZ f_W(\bmZ|\bmI) \prod z_{ii}^{p-2i+1}\nu(\rd\bmZ)\right)^{-1}\bmT^\T\\
&\qquad\times \int_{\mathcal{T}^+} f_W(\bmZ|\bmI) \prod z_{ii}^{p-2i+1}\nu(\rd\bmZ) \\
&=\hat{\bmSi}_0
\end{align*}
where
\begin{align*}
& \left(\int_{\mathcal{T}^+}\bmZ^\T\bmZ f_W(\bmZ|\bmI) \prod z_{ii}^{p-2i+1}\nu(\rd\bmZ)\right)^{-1}
\int_{\mathcal{T}^+} f_W(\bmZ|\bmI) \prod z_{ii}^{p-2i+1}\nu(\rd\bmZ) \\
&=\mathrm{diag}(d_1,\dots,d_p) \text{ with }d_i=1/(n+p-2i+1).
\end{align*}
Note that the group of $p\times p$ lower triangular matrices with
positive diagonal entries is solvable, and the result of \cite{Kiefer-1957} implies the minimaxity of
$ \hat{\bmSi}_0$.
\cite{Tsukuma-Kubokawa-2015} gives as a sequence of least favorable priors,
a sequence of invariant priors truncated on an expanding set.

In this section, 
we choose an appropriate sequence of Gaussian priors whose support is the entirety of the
parameter space and show that the Bayes risks converge to the constant risk of $\hat{\bmSi}_0$.
This implies that $\hat{\bmSi}_0$ is minimax.

%
As a new parameterization on $\bmTh$,
let
\begin{equation}\label{intro.xi}
 \begin{split}
 \xi_{ii}&=\log\theta_{ii}\text{ for }i=1,\dots,p, \\
 \xi_{ij}&=\frac{\theta_{ij}}{\theta_{ii}},\text{ for }1\leq j< i\leq p
\end{split}
\end{equation}
and let
\begin{equation}
 \bmxi=(\xi_{11},\xi_{21},\xi_{22},\dots,\xi_{p1},\dots,\xi_{pp})^\T\in\mathbb{R}^{p(p+1)/2}.
\end{equation}
The prior on $\xi_{ij}$ is
\begin{align*}
 \xi_{ij}\sim N(0,k_{ij}^2)\text{ for }1\leq j\leq i\leq p,
\end{align*}
and $\xi_{11},\xi_{21},\xi_{22},\dots,\xi_{p1},\dots,\xi_{pp}$ are assumed mutually independent.
Equivalently the density is 
\begin{align}\label{prior.xi}
 \bar{\pi}_k(\bmxi)\rd\bmxi
=\prod_{j\leq i}k_{ij}^{-1}\phi(\xi_{ij}/k_{ij})
\rd \bmxi
\end{align}
where $\phi(t)=(2\pi)^{-1/2}\exp(-t^2/2)$. 
Set
\begin{equation}\label{assumption.k}
 k_{ii}=k, \quad k_{ij}=k^{(i-j)k} \text{ for }i>j
\end{equation}
with $k\to\infty$, although, in the following, we keep the notation $k_{ii}$ and $k_{ij}$.

By \eqref{intro.xi} and \eqref{prior.xi}, we have
\begin{equation}\label{prior.Th}
 \pi_k (\bmTh)\rd\bmTh
 =\prod_{i=1}^p
 \left\{\frac{1}{k_{ii}}\phi(\log\theta_{ii}/k_{ii})\frac{1}{\theta_{ii}}\right\}
\prod_{j<i}\left\{\frac{1}{k_{ij}\theta_{ii}}\phi(\theta_{ij}/\{\theta_{ii}k_{ij}\})\right\}
\rd \bmTh.
\end{equation}
The prior distributions yield the Bayes estimators
\begin{align*}
 \bmde^\pi_k =\bmde^\pi_k (\bmT)&=
 \argmin_{\bmde}\int_{\bmZ\in\mathcal{T}^+} L(\bmZ\bmde\bmZ^\T)f_W(\bmT|\bmZ)\pi_k (\bmZ)\rd\bmZ
\end{align*}
with Bayes risks
\begin{equation}\label{br.3.1}
 r_k (\pi_k ,\bmde^\pi_k )=\iint
 L(\bmTh\bmde^\pi_k (\bmT)\bmTh^\T)f_W(\bmT|\bmTh)\gamma(\rd\bmT)\pi_k (\bmTh)\rd\bmTh.
\end{equation}

\begin{thm}
 Let $\bmV=\bmT\bmT^\T$ have distribution $\mathcal{W}_p(n,\bmSi)$ and let the loss
 be given by \eqref{Stein-loss.00}.
 Then the best equivariant estimator, $\hat{\bmSi}_0(\bmT)$,  given by \eqref{JSJS},
 is minimax, and the minimax constant risk is given by
\begin{align*}
R_0 =\int L(\hat{\Sigma}_0(\bmT))f_W(\bmT|\bmI)\gamma(\rd \bmT).
\end{align*}
\end{thm}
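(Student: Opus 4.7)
The plan is to mirror the proofs of Theorems \ref{thm:location.simple} and \ref{thm:scale}. Since $\hat{\bmSi}_0$ has constant risk $R_0$, one has the trivial inequality $r_k(\pi_k,\bmde^\pi_k) \le r_k(\pi_k,\hat{\bmSi}_0) = R_0$, so it suffices to establish $\liminf_k r_k(\pi_k,\bmde^\pi_k) \ge R_0$.

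First I would substitute $\bmZ = \bmTh\bmT$ in the inner risk integral of \eqref{br.3.1}, using left invariance of $\gamma$ and \eqref{formform} as in the proof of Theorem \ref{thm.BE.matrix}, giving
\begin{align*}
 r_k(\pi_k,\bmde^\pi_k) = \iint L(\bmde^*_k(\bmZ,\bmTh))\, f_W(\bmZ|\bmI)\, \gamma(\rd\bmZ)\, \pi_k(\bmTh)\,\rd\bmTh,
\end{align*}
where $\bmde^*_k(\bmZ,\bmTh) := \bmTh\,\bmde^\pi_k(\bmTh^{-1}\bmZ)\,\bmTh^\T$. Under Stein's loss the Bayes estimator is an explicit matrix ratio; a further substitution $\bmPhi = \bmX\bmTh$ in both integrals defining $\bmde^\pi_k(\bmTh^{-1}\bmZ)$, combined with the modular identity $\gamma(\rd\bmPhi) = \prod_i \theta_{ii}^{p-2i+1}\gamma(\rd\bmX)$, cancels the outer conjugation by $\bmTh$ and yields
\begin{align*}
 \bmde^*_k(\bmZ,\bmTh) = \left(\int \bmX^\T\bmX f_W(\bmZ|\bmX)\bar{\pi}_k(\bmxi_{\bmX\bmTh})\gamma(\rd\bmX)\right)^{-1}\int f_W(\bmZ|\bmX)\bar{\pi}_k(\bmxi_{\bmX\bmTh})\gamma(\rd\bmX).
\end{align*}

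Next I would pass to the $\bmxi$-parameterization and rescale $\xi_{ij} = k_{ij}\eta_{ij}$ so that the Gaussian prior \eqref{prior.xi} becomes the fixed standard density $\prod_{j\leq i}\phi(\eta_{ij})\rd\bmeta$. Let $\bmTh_k$ denote the matrix whose $\bmxi$-coordinates are $(k_{ij}\eta_{ij})$. The aim is the pointwise limit
\begin{align*}
 \Big(\prod_{j\leq i}k_{ij}\Big)\bar{\pi}_k(\bmxi_{\bmX\bmTh_k}) \longrightarrow \prod_{j\leq i}\phi(\eta_{ij})\quad\text{for each fixed }\bmX\text{ and a.e.\ }\bmeta.
\end{align*}
Because the common factor $\prod k_{ij}$ cancels in $\bmde^*_k$ and the limit density is constant in $\bmX$, dominated convergence on the inner integral yields $\bmde^*_k(\bmZ,\bmTh_k) \to \hat{\bmSi}_0(\bmZ)$. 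Continuity and non-negativity of Stein's loss then allow Fatou's lemma, applied to $\iint L(\bmde^*_k(\bmZ,\bmTh_k))f_W(\bmZ|\bmI)\gamma(\rd\bmZ)\prod\phi(\eta_{ij})\rd\bmeta$, to give $\liminf_k r_k \geq R_0$.

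The main obstacle is establishing the displayed pointwise limit, which is where the specific scaling \eqref{assumption.k} becomes essential. By the multiplicative structure of $\mathcal{T}^+$ one finds $\xi_{ii}^{\bmX\bmTh} = \xi_{ii}^\bmX + k\eta_{ii}$, while for $i>j$,
\begin{align*}
 \xi_{ij}^{\bmX\bmTh} = k^{(i-j)k}\eta_{ij} + \xi_{ij}^\bmX e^{k(\eta_{jj}-\eta_{ii})} + \sum_{j<l<i}\xi_{il}^\bmX k^{(l-j)k}\eta_{lj}\, e^{k(\eta_{ll}-\eta_{ii})}.
\end{align*}
The cross-terms carry exponentially growing factors $e^{k(\eta_{ll}-\eta_{ii})}$ that are unbounded for a.e.\ $\bmeta$, and the choice $k_{ij}=k^{(i-j)k}$ is calibrated precisely so that $\xi_{ij}^{\bmX\bmTh}/k_{ij} \to \eta_{ij}$ in the limit. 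Verifying this inductively over the off-diagonal indices and producing an integrable majorant valid uniformly in $k$ (for the inner dominated convergence and for Fatou on the outer double integral) is the technically delicate step, and is the feature that distinguishes the covariance case from the scalar location and scale problems treated above.
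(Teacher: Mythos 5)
Your proposal is correct and follows essentially the same route as the paper: the trivial upper bound, the change of variables reducing the Bayes estimator to $\bmTh\bmde^\pi_k(\bmTh^{-1}\bmL)\bmTh^\T$, the rescaling of the $\bmxi$-coordinates to a fixed standard Gaussian, the pointwise limit of the rescaled prior driven by the calibration $k_{ij}=k^{(i-j)k}$ (your off-diagonal expansion matches the paper's equation \eqref{3.29tabun}), and the dominated convergence plus Fatou conclusion. The integrable-majorant issue you flag as delicate is likewise left implicit in the paper's own proof, so there is no substantive divergence.
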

\begin{proof}
We show this theorem along the same lines as in \cite{Kubokawa-2004} and
\cite{Tsukuma-Kubokawa-2015} who modified the method of \cite{Girshick-Savage-1951}.

 Clearly 
\begin{align*}
 r_k (\pi_k ,\bmde^\pi_k )\leq r_k (\pi_k ,\hat{\bmSi}_0)=R_0,
\end{align*}
 and therefore, to show $  \lim_{k\to\infty} r_k(\phi_k,\delta^\phi_k)= R_0$,
it suffices  to prove
\begin{align*}
 \liminf_{k\to\infty} r_k (\pi_k ,\bmde^\pi_k )\geq R_0.
\end{align*}
In \eqref{br.3.1}, making the transformation $\bmL=\bmTh\bmT$ yields
\begin{equation}\label{br.3.2}
 r_k (\pi_k ,\bmde^\pi_k )=\iint
 L(\bmTh\bmde^\pi_k (\bmTh^{-1}\bmL)\bmTh^\T)f_W(\bmL|\bmI_p)\gamma(\rd\bmL)\pi_k (\bmTh)
 \rd\bmTh
\end{equation}
 where
\begin{align*}
 \bmde^\pi_k (\bmTh^{-1}\bmL)
 = \argmin_{\bmde}\int_{\bmZ\in\mathcal{T}^+} L(\bmZ\bmde\bmZ^\T)f_W(\bmL|\bmZ\bmTh^{-1})
 \pi_k (\bmZ)\rd\bmZ.
\end{align*}
 Now, make the transformation $\bmY=\bmZ\bmTh^{-1}$ with $\rd\bmZ=(\prod_{i=1}^p \theta_{ii}^{p-i+1})\rd\bmY$.
We then have
\begin{align*}
 \bmde^\pi_k (\bmTh^{-1}\bmL)
 = \argmin_{\bmde}\int_{\bmY\in\mathcal{T}^+} L(\bmY\bmTh\bmde\bmTh^\T\bmY^\T)f_W(\bmL|\bmY)
 \pi_k (\bmY\bmTh)\rd\bmY
\end{align*}
 namely,
 \begin{align*}
  \bmTh\bmde^\pi_k (\bmTh^{-1}\bmL)\bmTh^\T=\bmde_k ^*(\bmL|\bmTh)
 \end{align*}
 where
 \begin{align*}
\bmde_k ^*(\bmL|\bmTh)=  \argmin_{\bmde}\int_{\bmY\in\mathcal{T}^+} L(\bmY\bmde\bmY^\T)f_W(\bmL|\bmY)
 \pi_k (\bmY\bmTh)\rd\bmY.
 \end{align*}
 Hence, the Bayes risk \eqref{br.3.2} can be rewritten as
\begin{equation}\label{br.3.3}
 \begin{split}
 r_k (\pi_k ,\bmde^\pi_k )&=\iint
 L(\bmde^*_k (\bmL|\bmTh))f_W(\bmL|\bmI_p)\gamma(\rd\bmL)\pi_k (\bmTh)
  \rd\bmTh \\
  &=\iint
 L(\bmde^*_k (\bmL|\bmTh(\bmxi)))f_W(\bmL|\bmI_p)\gamma(\rd\bmL)\bar{\pi}_k (\bmxi)
  \rd\bmxi,
   \end{split}
\end{equation} 
 where $\bmTh(\bmxi)$ is from $\xi_{ii}=\log\theta_{ii}$ for $i=1,\dots,p$ and
 $\xi_{ij}=\theta_{ij}/\theta_{ii}$ for $1\leq j<i\leq p$.
Then we have 
 \begin{align*}
 r_k (\pi_k ,\bmde^\pi_k )
  &= \iint
 L(\bmde^*_k (\bmL|\bmTh(\bmxi)))f_W(\bmL|\bmI_p)\gamma(\rd\bmL)\bar{\pi}_k (\bmxi)
  \rd\bmxi \\
  &=\iint
  L(\bmde^*_k (\bmL|\bmTh(\bmk\!\bullet\!\bm{\omega})))f_W(\bmL|\bmI_p)\gamma(\rd\bmL)\bar{\pi}_1(\bm{\omega})
  \rd\bm{\omega},
 \end{align*}
where, for notational convenience,
 \begin{align*}
 \bmk\!\bullet\!\bm{\omega}&=(k_{11}\omega_{11},k_{21}\omega_{21},k_{22}\omega_{22},\dots,k_{pp}\omega_{pp}) \\
 \bmTh(\bmk\!\bullet\!\bm{\omega})_{ii}&=\exp(k_{ii}\omega_{ii}) \text{ for }i=1,\dots,p, \\
 \bmTh(\bmk\!\bullet\!\bm{\omega})_{ij}&=k_{ij}\omega_{ij}\exp(k_{ii}\omega_{ii}),
 \text{ for }1\leq j<i\leq p.
\end{align*}

By Lemma \ref{saigo} below, we have
\begin{equation}
 \lim_{k\to \infty} \bmde^*_k (\bmL|\bmTh(\bmk\!\bullet\!\bm{\omega}))=\hat{\bmSi}_0(\bmL)
\end{equation}
 and by the continuity of $L(\cdot)$,
\begin{align}
\lim_{k\to\infty}L(\bmde^*_k (\bmL|\bmTh(\bmk\!\bullet\!\bm{\omega})))=L(\hat{\bmSi}_0(\bmL)).
\end{align} 
Also, by Fatou's lemma, we have
\begin{align*}
&\liminf_{k\to\infty} r_k (\pi_k ,\bmde^\pi_k ) \\
 &\geq \liminf_{k\to\infty}
 \int\int_{\bmL\in\mathcal{T}^+}
  L(\bmde^*_k (\bmL|\bmTh(\bmk\!\bullet\!\bm{\omega})))f_W(\bmL|\bmI_p)\gamma(\rd\bmL)\bar{\pi}_1(\bm{\omega})
  \rd\bm{\omega}\\
 &\geq 
 \int
 \int_{\bmL\in\mathcal{T}^+}
  \liminf_{k\to\infty}L(\bmde^*_k (\bmL|\bmTh(\bmk\!\bullet\!\bm{\omega})))f_W(\bmL|\bmI_p)\gamma(\rd\bmL)\bar{\pi}_1(\bm{\omega})
  \rd\bm{\omega} \\
 &=
 \int
 \int_{\bmL\in\mathcal{T}^+}
  L(\hat{\bmSi}_0(\bmL))f_W(\bmL|\bmI_p)\gamma(\rd\bmL)\bar{\pi}_1(\bm{\omega})
  \rd\bm{\omega} \\
 &=
 \int
 \bar{\pi}_1(\bm{\omega})  \rd\bm{\omega}
  \int_{\bmL\in\mathcal{T}^+}L(\hat{\bmSi}_0(\bmL))f_W(\bmL|\bmI_p)\gamma(\rd\bmL) \\
&=R_0. 
\end{align*}
\end{proof}

\begin{lemma}\label{saigo}
 \begin{align*}
  \lim_{k\to \infty} \bmde^*_k (\bmL|\bmTh(\bmk\!\bullet\!\bm{\omega}))=\hat{\bmSi}_0(\bmL).
 \end{align*}
\end{lemma}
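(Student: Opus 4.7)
The plan exploits the closed form of the Bayes estimator under Stein's loss, namely
$$\bmde^*_k(\bmL|\bmTh) = \biggl(\int_{\mathcal{T}^+} \bmY^\T\bmY\, f_W(\bmL|\bmY)\,\pi_k(\bmY\bmTh)\rd\bmY\biggr)^{-1} \int_{\mathcal{T}^+} f_W(\bmL|\bmY)\,\pi_k(\bmY\bmTh)\rd\bmY,$$
and aims to show that, with $\bmTh=\bmTh(\bmk\!\bullet\!\bm{\omega})$, the measure $\pi_k(\bmY\bmTh(\bmk\!\bullet\!\bm{\omega}))\rd\bmY$ converges (modulo a multiplicative factor free of $\bmY$ that cancels in the ratio) to a constant multiple of the left-invariant Haar measure $\gamma(\rd\bmY)$. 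Once this is done, the generalized Bayes calculation already carried out in Section \ref{sec:minimaxity_matrix} identifies the limit with $\hat{\bmSi}_0(\bmL)$.

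First I would use the change-of-variable identity implicit in \eqref{intro.xi}--\eqref{prior.Th},
$$\pi_k(\bmW)\rd\bmW = \bar\pi_k(\bmxi(\bmW))\,\prod_{i=1}^p w_{ii}^{-i}\,\rd\bmW,$$
specialized at $\bmW=\bmY\bmTh(\bmk\!\bullet\!\bm{\omega})$. Since $W_{ii}=y_{ii}\exp(k\omega_{ii})$, the Jacobian splits as $\prod_i y_{ii}^{-i}\cdot\prod_i\exp(-ik\omega_{ii})$: the first factor is exactly the Haar density $\gamma(\rd\bmY)/\rd\bmY$, and the second is $\bmY$-free.

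Next I would analyze $\bar\pi_k=\prod_{j\le i}k_{ij}^{-1}\phi(\xi_{ij}/k_{ij})$ coordinate by coordinate. Using $(\bmY\bmTh)_{ij}=y_{ii}\theta_{ij}+\sum_{m=j}^{i-1}y_{im}\theta_{mj}$, one obtains $\xi_{ii}/k_{ii}=\log y_{ii}/k+\omega_{ii}$, and for $i>j$, $\xi_{ij}/k_{ij}$ equals $\omega_{ij}$ plus a sum of remainders each of the form $(\text{quantity fixed in }\bmY,\bm{\omega})\cdot k_{mj}\exp(k(\omega_{mm}-\omega_{ii}))/k_{ij}$, where $j\le m<i$ and $k_{mj}$ is absent when $m=j$. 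Since $k_{mj}/k_{ij}=k^{(m-i)k}$ decays super-exponentially in $k$, it dominates any fixed $\exp(k(\omega_{mm}-\omega_{ii}))$, so $\xi_{ij}/k_{ij}\to\omega_{ij}$ pointwise in $(\bmY,\bm{\omega})$. Combined with $\xi_{ii}/k_{ii}\to\omega_{ii}$ and continuity of $\phi$,
$$\Bigl(\prod_{j\le i}k_{ij}\Bigr)\bar\pi_k\bigl(\bmxi(\bmY\bmTh(\bmk\!\bullet\!\bm{\omega}))\bigr)\longrightarrow \prod_{j\le i}\phi(\omega_{ij}),$$
which is again free of $\bmY$.

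Finally I would apply dominated convergence inside the numerator and denominator. After factoring out the $\bmY$-free quantities $\prod_{j\le i}k_{ij}^{-1}$ and $\prod_i\exp(-ik\omega_{ii})$ (identical in both integrals and hence cancelling), the remaining integrand is bounded uniformly in $k$ by $(2\pi)^{-p(p+1)/4}\|\bmY^\T\bmY\|\cdot f_W(\bmL|\bmY)\cdot\prod_i y_{ii}^{-i}$ (respectively, without $\|\bmY^\T\bmY\|$), via $\phi\le(2\pi)^{-1/2}$. This majorant is integrable on $\mathcal{T}^+$: the factor $|\bmY\bmL|^n=|\bmL|^n\prod_i y_{ii}^n$ in $f_W(\bmL|\bmY)$ absorbs the singularity $\prod_i y_{ii}^{-i}$ at the boundary (since $n\ge p\ge i$), and $\exp\{-\tfrac12\mathrm{tr}(\bmY\bmL)(\bmY\bmL)^\T\}$ supplies Gaussian decay in the entries of $\bmY$ for fixed $\bmL$. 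Passing the limit yields
$$\lim_{k\to\infty}\bmde^*_k(\bmL|\bmTh(\bmk\!\bullet\!\bm{\omega}))=\Bigl(\int_{\mathcal{T}^+}\bmY^\T\bmY\, f_W(\bmL|\bmY)\gamma(\rd\bmY)\Bigr)^{-1}\int_{\mathcal{T}^+} f_W(\bmL|\bmY)\gamma(\rd\bmY)=\hat{\bmSi}_0(\bmL),$$
the last equality being the computation in Section \ref{sec:minimaxity_matrix}. The main obstacle is the bookkeeping for the $k$-uniform dominating function, and in particular verifying that the cross-terms in $\xi_{ij}(\bmY\bmTh)/k_{ij}$ cannot spoil the trivial bound on $\phi$; the super-exponential scaling $k_{ij}=k^{(i-j)k}$ is precisely what is needed for both the pointwise convergence of the cross-terms and this uniform boundedness.
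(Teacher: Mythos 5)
Your proposal is correct and takes essentially the same route as the paper's proof: normalize $\pi_k (\bmY\bmTh(\bmk\!\bullet\!\bm{\omega}))$ by $\bmY$-free factors, show pointwise convergence to a constant multiple of the Haar density $\prod_{i} y_{ii}^{-i}$ via the super-exponential scaling $k_{ij}=k^{(i-j)k}$, and pass to the limit in the numerator and denominator of the explicit Bayes-estimator ratio by dominated convergence. The only difference is that you exhibit an explicit dominating function, a detail the paper's proof leaves implicit.
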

\begin{proof}
Recall
\begin{equation}\label{eq.saigo.lemma}
 \begin{split}
\bmde_k ^*(\bmL|\bmTh)&=  \argmin_{\bmde}\int_{\bmY\in\mathcal{T}^+} L(\bmY\bmde\bmY^\T)f_W(\bmL|\bmY)
 \pi_k (\bmY\bmTh)\rd\bmY \\
&=\left(\int_{\bmY\in\mathcal{T}^+} \bmY^\T\bmY f_W(\bmL|\bmY)
  \pi_k (\bmY\bmTh)\rd\bmY\right)^{-1}\\
&\qquad \times  \int_{\bmY\in\mathcal{T}^+} f_W(\bmL|\bmY)\pi_k (\bmY\bmTh)\rd\bmY
 \end{split}
\end{equation} 
and where
 \begin{align*}
   \pi_k (\bmTh)
 =\prod_{i=1}^p
 \left\{\frac{1}{k_{ii}}\phi(\log\theta_{ii}/k_{ii})\frac{1}{\theta_{ii}}\right\}
\prod_{j<i}\left\{\frac{1}{k_{ij}\theta_{ii}}\phi(\theta_{ij}/\{\theta_{ii}k_{ij}\})\right\}.
 \end{align*}
 Consider $ \pi_k (\bmY\bmTh(\bmk\!\bullet\!\bm{\omega}))$ in the following.
 The $(i,i)$ diagonal component of $\bmY\bmTh(\bmk\!\bullet\!\bm{\omega})$
with $\bmY\in\mathcal{T}^+$ 
 is
\begin{align*}
 y_{ii}\exp(k_{ii}\omega_{ii})
\end{align*}
 and the non-diagonal $(i,j)$ component is
 \begin{align*}
  y_{ij}\exp(k_{jj}\omega_{jj}) + \sum_{l=j+1}^{i-1} y_{il}
\exp(k_{ll}\omega_{ll})k_{lj}\omega_{lj}
  +y_{ii}k_{ij}\omega_{ij}\exp(k_{ii}\omega_{ii}).
 \end{align*}
Then, for $i>j$,
\begin{equation}\label{3.29tabun}
 \begin{split}
 \frac{1}{k_{ij}}
 \frac{(\bmY\bmTh(\bmk\!\bullet\!\bm{\omega}))_{ij}}
 {(\bmY\bmTh(\bmk\!\bullet\!\bm{\omega}))_{ii}}
&=\frac{y_{ij}}{y_{ii}}\frac{\exp(k_{jj}\omega_{jj}-k_{ii}\omega_{ii})}{k_{ij}}+\omega_{ij} \\
 &\qquad +\sum_{l=j+1}^{i-1} w_{lj}\frac{y_{il}}{y_{ii}}
 \frac{k_{lj}}{k_{ij}}\exp(k_{ll}\omega_{ll}-k_{ii}\omega_{ii}).
\end{split}
\end{equation}
Recall we set
\begin{align*}
 k_{ii}=k, \quad k_{ij}=k^{(i-j)k}.
\end{align*}
Then \eqref{3.29tabun} is equal to
 \begin{align*}
& \frac{1}{k_{ij}}
 \frac{(\bmY\bmTh(\bmk\!\bullet\!\bm{\omega}))_{ij}}
 {(\bmY\bmTh(\bmk\!\bullet\!\bm{\omega}))_{ii}} \\
&=  \frac{y_{ij}}{y_{ii}}\left(\frac{\exp(\omega_{jj}-\omega_{ii})}{k^{i-j}}\right)^k
  +\omega_{ij} 
  +\sum_{l=j+1}^{i-1} w_{lj}\frac{y_{il}}{y_{ii}}
\left(\frac{\exp(\omega_{ll}-\omega_{ii})}{k^{i-l}}\right)^k
 \end{align*}
 and hence it follows that
\begin{align*}
\lim_{k\to\infty} \frac{1}{k_{ij}}
 \frac{(\bmY\bmTh(\bmk\!\bullet\!\bm{\omega}))_{ij}}
 {(\bmY\bmTh(\bmk\!\bullet\!\bm{\omega}))_{ii}}=\omega_{ij}.
\end{align*} 
 Similarly we have
 \begin{align*}
 \lim_{k\to\infty} \frac{1}{k_{ii}}
\log (\bmY\bmTh(\bmk\!\bullet\!\bm{\omega}))_{ii}=
\lim_{k\to\infty} \frac{\log y_{ii}+k_{ii}\omega_{ii} }{k_{ii}}
 =\omega_{ii}.
 \end{align*}
Therefore
\begin{align}
 \lim_{k\to\infty}\prod_{i\geq j}k_{ij}\prod_{i=1}^p\exp(ik_{ii}\omega_{ii})
\pi_k (\bmY\bmTh(\bmk\!\bullet\!\bm{\omega}))
=\frac{\prod_{j\geq j}\phi(\omega_{ij})}{\prod_{i=1}^p y_{ii}^i },
\end{align}
and, by the dominated convergence theorem,
\begin{equation*}
 \begin{split}
\bmde_k ^*(\bmL|\bmTh(\bmk\!\bullet\!\bm{\omega}))&=  \argmin_{\bmde}\int_{\bmY\in\mathcal{T}^+} L(\bmY\bmde\bmY^\T)f_W(\bmL|\bmY)
 \pi_k (\bmY\bmTh(\bmk\!\bullet\!\bm{\omega}))\rd\bmY \\
&=\left(\int_{\bmY\in\mathcal{T}^+} \bmY^\T\bmY f_W(\bmL|\bmY)
  \pi_k (\bmY\bmTh(\bmk\!\bullet\!\bm{\omega}))\rd\bmY\right)^{-1}\\
  &\qquad \times  \int_{\bmY\in\mathcal{T}^+} f_W(\bmL|\bmY)\pi_k (\bmY\bmTh(\bmk\!\bullet\!\bm{\omega}))\rd\bmY \\
  &\to
  \left(\int_{\bmY\in\mathcal{T}^+} \bmY^\T\bmY f_W(\bmL|\bmY)\gamma(\rd\bmY)
  \right)^{-1}\\
  &\qquad \times  \int_{\bmY\in\mathcal{T}^+} f_W(\bmL|\bmY)\gamma(\rd\bmY)\rd\bmY \\
  &=\hat{\bmSi}_0(\bmL).
 \end{split}
\end{equation*} 
\end{proof}

\section{Concluding remarks}
\label{sec:CR}
We have reviewed some known results on establishing minimaxity of best equivariant procedures.
While none of the results established are new, the proofs of minimaxity are somewhat divergent
from the typical minimaxity proofs in the literature in that the least favorable sequence
is smooth and strictly positive on the support of the approximated right invariant measure:
it is not a sequence of truncated versions of the invariant prior on expanding sets.
In this sense, our proofs are in the same spirit as the common textbook 
proof of minimaxity of the mean of a normal distribution.
In fact the same sequence of priors that works in the normal case is shown to work
in the general location case. Hence the present method provides a degree of unification
and simultaneously simplifies the proofs.

We note that the Gaussian kernel is not necessary and could be replaced by
a bounded, continuous, positive density.

  Our choices of particular loss featured in each of the problems also simplified
  the analyses, in the sense that, in each case, the form of the Bayes estimate
  could be explicitly given.
  This facilitated the use of Fatou's Lemma in establishing the limiting Bayes risk
  as being equal to the constant risk of the best equivariant estimator.

  An approach for more general loss function could be constructed by requiring
  that all Bayes estimators (for priors with full support) be unique, and that
  the loss is sufficiently smooth that statements such as \eqref{important.1} hold in each
  problem.

  It is also worth noting that, as in \cite{Tsukuma-Kubokawa-2015},
  in the problem of estimating a covariance matrix, a specific least favorable sequence of priors
  is established.

\end{document}